\newtheorem*{theorem*}{Theorem}
\newtheorem*{proposition*}{Proposition}
\newtheorem*{corollary*}{Corollary}
\title{A formula for the energy of circulant graphs with two generators\footnote{The author acknowledges support from the Swiss NSF grant $200021\_132528/1$.}}
\author{Justine Louis}
\date{$10$th August $2015$}
\begin{document}
        \maketitle
\begin{abstract}
In this note, we derive closed formulas for the energy of circulant graphs generated by $1$ and $\gamma$, where $\gamma\geqslant2$ is an integer. We also find a formula for the energy of the complete graph without a Hamilton cycle.
\end{abstract}
\begin{center}
\line(1,0){250}
\end{center}
\par\vspace{\baselineskip}
Let $1\leqslant\gamma_1\leqslant\cdots\leqslant\gamma_d$ be integers. The circulant graph $C^{\gamma_1,\ldots,\gamma_d}_n$ generated by $\gamma_1,\ldots,\gamma_d$ on $n$ vertices labelled $0,1,\ldots,n-1$, is the $2d$-regular graph such that for all $v\in\mathbb{Z}/n\mathbb{Z}$, $v$ is connected to $v+\gamma_i$ $\textnormal{mod }n$ and to $v-\gamma_i$ $\textnormal{mod }n$, for all $i=1,\ldots,d$. The adjacency matrix $A=(A_{ij})$ of a graph on $n$ vertices is the $n\times n$ matrix with rows and columns indexed by the vertices such that $A_{ij}$ is the number of edges connecting vertices $i$ and $j$. Let $\lambda_k$, $k=1,\ldots,n$, denote the eigenvalues of the adjacency matrix. The energy of a graph $G$ on $n$ vertices is defined by the sum of the absolute value of the eigenvalues of $A$, that is
\begin{equation*}
E(G)=\sum_{k=1}^n\lvert\lambda_k\rvert.
\end{equation*}
The energy of circulant graphs and integral circulant graphs is widely studied, see for example \cite{MR2401632,MR2851448,MR2210213,stevanovic2005remarks}. It has interesting applications in theoretical chemistry, namely, it is related to the $\pi$-electron energy of a conjugated carbon molecule, see \cite{MR2977757}. In the following theorem, we give a formula for the energy of circulant graphs with two generators, $1$ and $\gamma$, $\gamma\geqslant2$. The formula is interesting as $n$ is larger than $\gamma$. 
\begin{theorem*}
Let $D_n(x)$ denote the Dirichlet kernel. The energy of the circulant graph $C^{1,2}_n$ is given by
\begin{equation*}
E(C^{1,2}_n)=4\left(D_{\lfloor n/6\rfloor}(2\pi/n)+D_{\lfloor n/6\rfloor}(4\pi/n)\right).
\end{equation*}
For $\gamma\geqslant3$, the energy of the circulant graph $C^{1,\gamma}_n$ is given by
\begin{equation*}
E(C^{1,\gamma}_n)=4\sum_{m\in\{1,\gamma\}}\left(\sum_{l=0}^{\lceil\gamma/2\rceil-1}D_{\lfloor(2l+1)n/(2(\gamma+1))\rfloor}\left(\frac{2\pi m}{n}\right)-\sum_{l=0}^{\lceil\gamma/2\rceil-2}D_{\lfloor(2l+1)n/(2(\gamma-1))\rfloor}\left(\frac{2\pi m}{n}\right)\right)
\end{equation*}
where $\lfloor x\rfloor$ denotes the greatest integer smaller or equal to $x$ and $\lceil x\rceil$ denotes the smallest integer greater or equal to $x$.
\end{theorem*}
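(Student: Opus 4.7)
The plan is to use the standard fact that the eigenvalues of $C_n^{1,\gamma}$ are $\lambda_k = 2\cos(2\pi k/n) + 2\cos(2\pi k\gamma/n)$ for $k = 0, 1, \ldots, n-1$, then factor them via the sum-to-product identity as
\[
\lambda_k = 4\cos\!\left(\tfrac{\pi k(\gamma+1)}{n}\right)\cos\!\left(\tfrac{\pi k(\gamma-1)}{n}\right).
\]
This factorization is the algebraic heart of the proof: the sign of $\lambda_k$ is controlled by two independent cosine factors whose real zeros on $[0, n/2]$ are $z_l^+ = (2l+1)n/(2(\gamma+1))$ for $l = 0, \ldots, \lceil\gamma/2\rceil - 1$ and $z_l^- = (2l+1)n/(2(\gamma-1))$ for $l = 0, \ldots, \lceil\gamma/2\rceil - 2$. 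Since the trace of the adjacency matrix vanishes, $\sum_{k=0}^{n-1} \lambda_k = 0$ and the energy simplifies to
\[
E(C_n^{1,\gamma}) \;=\; 2\!\sum_{k \in S_+}\!\lambda_k \;=\; 4\!\sum_{m \in \{1,\gamma\}}\sum_{k \in S_+}\cos(2\pi km/n),
\]
where $S_+ = \{k : \lambda_k > 0\}$. It therefore suffices to identify $S_+$ and evaluate these cosine sums.

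For the sign analysis, the symmetry $\lambda_{n-k} = \lambda_k$ reduces the problem to $k \in [0, \lfloor n/2\rfloor]$. A direct comparison shows that the two families of zeros strictly interleave, $z_0^+ < z_0^- < z_1^+ < z_1^- < \cdots < z_{\lceil\gamma/2\rceil - 1}^+$, via the elementary inequalities $(2l+1)/(\gamma+1) < (2l+1)/(\gamma-1)$ and $(2l+1)/(\gamma-1) < (2l+3)/(\gamma+1)$ (the latter amounting to $\gamma > 2l+2$, which holds throughout the admissible range of $l$). Since $\lambda_0 > 0$ and the sign of $\lambda_k$ flips at every crossing of a zero, the positive arcs inside $[0, \lfloor n/2\rfloor]$ are exactly $[0, \lfloor z_0^+\rfloor]$ together with $[\lfloor z_l^-\rfloor + 1, \lfloor z_{l+1}^+\rfloor]$ for $l = 0, \ldots, \lceil\gamma/2\rceil - 2$.

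The next step is to convert the cosine sum over each such arc into Dirichlet kernels. Pairing each positive arc in $[0, n/2]$ with its mirror image in $[n/2, n)$ under $k \mapsto n-k$, and using $\cos(2\pi(n-k)m/n) = \cos(2\pi km/n)$, the initial pair becomes the symmetric window $\{-\lfloor z_0^+\rfloor, \ldots, \lfloor z_0^+\rfloor\}$ with cosine sum $D_{\lfloor z_0^+\rfloor}(2\pi m/n)$, while each subsequent pair forms a symmetric annulus whose cosine sum telescopes to $D_{\lfloor z_{l+1}^+\rfloor}(2\pi m/n) - D_{\lfloor z_l^-\rfloor}(2\pi m/n)$. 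Summing over $l$ reproduces the announced formula, the $D_{\lfloor(2l+1)n/(2(\gamma+1))\rfloor}$ terms arising with a plus sign from the $(\gamma+1)$-family of zeros and the $D_{\lfloor(2l+1)n/(2(\gamma-1))\rfloor}$ terms with a minus sign from the $(\gamma-1)$-family. The case $\gamma = 2$ must be treated separately because the second family of zeros is empty on $(0, n/2)$: only the initial arc survives and the formula collapses to $D_{\lfloor n/6\rfloor}(2\pi m/n)$ for $m \in \{1, 2\}$.

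The principal obstacle I anticipate is the interleaving-and-boundary argument: one must check that the first and last zeros in $(0, n/2)$ both belong to the $(\gamma+1)$-family, and that the point $k = n/2$, where both cosine factors vanish for even $\gamma$ and neither does for odd $\gamma$, does not create an anomalous sign flip (the two simultaneous flips cancel in the even case, and there is nothing to flip in the odd case). Beyond that, the work is a careful bookkeeping of the floor functions, ensuring that the integer boundaries between consecutive positive arcs are correctly aligned so that the symmetric-annulus Dirichlet identity applies without over- or under-counting.
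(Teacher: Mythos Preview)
Your plan is correct and follows the same overall strategy as the paper: locate the zeros of $\lambda_k$ on $[0,n/2]$, split the sum accordingly, and convert each block into a Dirichlet kernel. Two ingredients you introduce, however, genuinely streamline the argument compared with the paper's proof. First, the sum-to-product factorization $\lambda_k=4\cos(\pi k(\gamma+1)/n)\cos(\pi k(\gamma-1)/n)$ makes the list of zeros and their interleaving immediate, whereas the paper simply writes down the roots of $\cos x+\cos\gamma x=0$ separately for $\gamma$ odd and $\gamma$ even without explaining their origin. Second, and more significantly, your use of the trace identity $\sum_k\lambda_k=0$ to write $E=2\sum_{k\in S_+}\lambda_k$ eliminates the negative block entirely. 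In the paper's proof, that negative block produces a boundary term $D_{\lfloor n/2\rfloor}(2\pi m/n)$ or $D_{n/2-1}(2\pi m/n)$ which then has to be evaluated in four separate cases ($n$ odd; $n$ even with $\gamma$ even; $n$ even with $\gamma$ odd) and shown to cancel. Your pairing $k\leftrightarrow n-k$ together with the observation that $\lambda_{n/2}\leqslant 0$ whenever $n$ is even (equal to $0$ for even $\gamma$, equal to $-4$ for odd $\gamma$) disposes of this in one stroke, so the ``principal obstacle'' you anticipate is in fact already neutralized by your setup. The paper's route buys nothing extra; yours is uniformly shorter.
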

\begin{proof}
The adjacency matrix of a circulant graph is circulant, it follows that the eigenvalues of $C^{1,\gamma}_n$ are given by $\lambda_k=2\cos(2\pi k/n)+2\cos(2\pi\gamma k/n)$, $k=0,\ldots,n-1$, (see \cite{MR1271140}). The energy of $C^{1,\gamma}_n$ is then given by
\begin{equation*}
E(C^{1,\gamma}_n)=2\sum_{k=0}^{n-1}\lvert\cos(2\pi k/n)+\cos(2\pi\gamma k/n)\rvert.
\end{equation*}
Let $\gamma=2$. The two roots of the equation $\cos{x}+\cos(2x)=0$ for $x\in\left[0,\pi\right]$ are $\pi/3$ and $\pi$. We write the energy as
\begin{align*}
E(C^{1,2}_n)&=4+4\sum_{k=1}^{\lceil n/2\rceil-1}\lvert\cos(2\pi k/n)+\cos(4\pi k/n)\rvert\\
&=4+4\sum_{k=1}^{\lfloor n/6\rfloor}(\cos(2\pi k/n)+\cos(4\pi k/n))-4\sum_{k=\lfloor n/6\rfloor+1}^{\lceil n/2\rceil-1}(\cos(2\pi k/n)+\cos(4\pi k/n)).
\end{align*}
The sum of $\cos(kx)$ over consecutive $k$'s can be expressed in terms of the Dirichlet kernel, namely
\begin{equation*}
D_n(x)=1+2\sum_{k=1}^n\cos(kx)=\frac{\sin((n+1/2)x)}{\sin(x/2)}.
\end{equation*}
As a consequence,
\begin{equation*}
2\sum_{k=n+1}^m\cos(kx)=D_m(x)-D_n(x).
\end{equation*}
The energy of $C^{1,2}_n$ is thus given by
\begin{equation*}
E(C^{1,2}_n)=4D_{\lfloor n/6\rfloor}(2\pi/n)+4D_{\lfloor n/6\rfloor}(4\pi/n)-2D_{\lceil n/2\rceil-1}(2\pi/n)-2D_{\lceil n/2\rceil-1}(4\pi/n).
\end{equation*}
The formula then follows from the fact that for odd $n$, $D_{(n-1)/2}(2\pi m/n)=0$ for $m=1,2$, and for even $n$, $D_{n/2-1}(2\pi/n)=1$ and $D_{n/2-1}(4\pi/n)=-1$.\\
Let $\gamma\geqslant3$. For odd $\gamma$, the $\gamma$ solutions of the equation $\cos{x}+\cos{\gamma x}=0$ for $x\in\left[0,\pi\right]$ are given in the increasing order by $\pi/(\gamma+1),\pi/(\gamma-1),3\pi/(\gamma+1),3\pi/(\gamma-1),\ldots,(\gamma-2)\pi/(\gamma-1),\gamma\pi/(\gamma+1).$ For even $\gamma$, they are given by $\pi/(\gamma+1),\pi/(\gamma-1),3\pi/(\gamma+1),3\pi/(\gamma-1),\ldots,(\gamma-3)\pi/(\gamma-1),(\gamma-1)\pi/(\gamma+1),\pi$. Let $n$ be odd. We split the sum over $k$ of cosines to group the positive terms together and the negative terms together. The energy is given by
\begin{align}
E(C^{1,\gamma}_n)&=4+4\sum_{k=1}^{(n-1)/2}\lvert\cos(2\pi k/n)+\cos(2\pi\gamma k/n)\rvert\nonumber\\
&=4+4\sum_{k=1}^{\lfloor n/(2(\gamma+1))\rfloor}(\cos(2\pi k/n)+\cos(2\pi\gamma k/n))\nonumber\\
&\ \ \ +4\sum_{l=0}^{\lceil\gamma/2\rceil-2}\sum_{k=\lfloor(2l+1)n/(2(\gamma-1))\rfloor+1}^{\lfloor(2l+3)n/(2(\gamma+1))\rfloor}(\cos(2\pi k/n)+\cos(2\pi\gamma k/n))\nonumber\\
&\ \ \ -4\sum_{l=0}^{\lceil\gamma/2\rceil-1}\sum_{k=\lfloor(2l+1)n/(2(\gamma+1))\rfloor+1}^{\lfloor(2l+1)n/(2(\gamma-1))\rfloor}(\cos(2\pi k/n)+\cos(2\pi\gamma k/n)).
\label{Ecos}
\end{align}
Writing the above relation in terms of Dirichlet kernels, it comes
\begin{align}
E(C^{1,\gamma}_n)&=2\sum_{m\in\{1,\gamma\}}\bigg(D_{\lfloor n/(2(\gamma+1))\rfloor}(2\pi m/n)\nonumber\\
&\ \ \ \qquad\qquad+\sum_{l=0}^{\lceil\gamma/2\rceil-2}\left(D_{\lfloor(2l+3)n/(2(\gamma+1))\rfloor}(2\pi m/n)-D_{\lfloor(2l+1)n/(2(\gamma-1))\rfloor}(2\pi m/n)\right)\nonumber\\
&\ \ \ \qquad\qquad-\sum_{l=0}^{\lceil\gamma/2\rceil-1}\left(D_{\lfloor(2l+1)n/(2(\gamma-1))\rfloor}(2\pi m/n)-D_{\lfloor(2l+1)n/(2(\gamma+1))\rfloor}(2\pi m/n)\right)\bigg).
\label{ED1}
\end{align}
Hence
\begin{align}
E(C^{1,\gamma}_n)=\sum_{m\in\{1,\gamma\}}\bigg(4&\sum_{l=0}^{\lceil\gamma/2\rceil-1}D_{\lfloor(2l+1)n/(2(\gamma+1))\rfloor}(2\pi m/n)\nonumber\\
&-4\sum_{l=0}^{\lceil\gamma/2\rceil-2}D_{\lfloor(2l+1)n/(2(\gamma-1))\rfloor}(2\pi m/n)-2D_{\lfloor n/2\rfloor}(2\pi m/n)\bigg).
\label{ED2}
\end{align}
The formula follows from the fact that $D_{\lfloor n/2\rfloor}(2\pi m/n)=0$ for $m=1,\gamma$.\\
Let $n$ be even. As for the case when $n$ is odd, we write the energy as follow
\begin{equation*}
E(C^{1,\gamma}_n)=4(1+\delta_{\gamma\textnormal{ odd}})+4\sum_{k=1}^{n/2-1}\lvert\cos(2\pi k/n)+\cos(2\pi\gamma k/n)\rvert
\end{equation*}
where $\delta_{\gamma\textnormal{ odd}}=1$ if $\gamma$ is odd and $0$ otherwise.\\
For even $\gamma$, relations (\ref{Ecos}), (\ref{ED1}) and (\ref{ED2}) hold. The theorem then follows from the fact that $D_{n/2}(2\pi/n)=-1$ and $D_{n/2}(2\pi\gamma/n)=1$. For odd $\gamma$, we have
\begin{align*}
E(C^{1,\gamma}_n)&=8+4\sum_{k=1}^{\lfloor n/(2(\gamma+1))\rfloor}(\cos(2\pi k/n)+\cos(2\pi\gamma k/n))\\
&\ \ \ +4\sum_{l=0}^{\lceil\gamma/2\rceil-2}\sum_{k=\lfloor(2l+1)n/(2(\gamma-1))\rfloor+1}^{\lfloor(2l+3)n/(2(\gamma+1))\rfloor}(\cos(2\pi k/n)+\cos(2\pi\gamma k/n))\\
&\ \ \ -4\sum_{l=0}^{\lceil\gamma/2\rceil-2}\sum_{k=\lfloor(2l+1)n/(2(\gamma+1))\rfloor+1}^{\lfloor(2l+1)n/(2(\gamma-1))\rfloor}(\cos(2\pi k/n)+\cos(2\pi\gamma k/n))\\
&\ \ \ -4\sum_{k=\lfloor(2\lceil\gamma/2\rceil-1)n/(2(\gamma+1))\rfloor+1}^{n/2-1}(\cos(2\pi k/n)+\cos(2\pi\gamma k/n)).
\end{align*}
Expressing it in terms of Dirichlet kernels, it comes
\begin{align*}
E(C^{1,\gamma}_n)&=4+2\sum_{m\in\{1,\gamma\}}\bigg(D_{\lfloor n/(2(\gamma+1))\rfloor}(2\pi m/n)\\
&\ \ \ \qquad\qquad+\sum_{l=0}^{\lceil\gamma/2\rceil-2}\left(D_{\lfloor(2l+3)n/(2(\gamma+1))\rfloor}(2\pi m/n)-D_{\lfloor(2l+1)n/(2(\gamma-1))\rfloor}(2\pi m/n)\right)\\
&\ \ \ \qquad\qquad-\sum_{l=0}^{\lceil\gamma/2\rceil-2}\left(D_{\lfloor(2l+1)n/(2(\gamma-1))\rfloor}(2\pi m/n)-D_{\lfloor(2l+1)n/(2(\gamma+1))\rfloor}(2\pi m/n)\right)\\
&\ \ \ \qquad\qquad-D_{n/2-1}(2\pi m/n)+D_{\lfloor(2\lceil\gamma/2\rceil-1)n/(2(\gamma+1))\rfloor}(2\pi m/n)\bigg).
\end{align*}
The theorem follows from the fact that $D_{n/2-1}(2\pi m/n)=1$ for $m=1,\gamma$.
\end{proof}
A graph is called hyperenergetic if his energy is greater than the one of the complete graph $K_n$. The eigenvalues of $K_n$ are given by $n-1$ and $-1$ with multiplicity $n-1$, so that his energy is given by $E(K_n)=2(n-1)$.\\
The figure on the left below shows how the energy of $C^{1,\gamma}_n$ grows with respect to $n$ for $\gamma=8$. We see that it is not hyperenergetic and that the energy grows more or less linearly with respect to $n$. The figure on the right shows the energy of $C^{1,\gamma}_n$ with fixed $n$ as $\gamma$ varies. We observe that the energy stays more or less constant independently of $\gamma$.
\begin{figure}[H]
\label{graph}
\centering
\subfigure[Energy of $C^{1,8}_n$ (crosses) and of $K_n$ (circles) with respect to $n$]{\includegraphics[width=6.15cm]{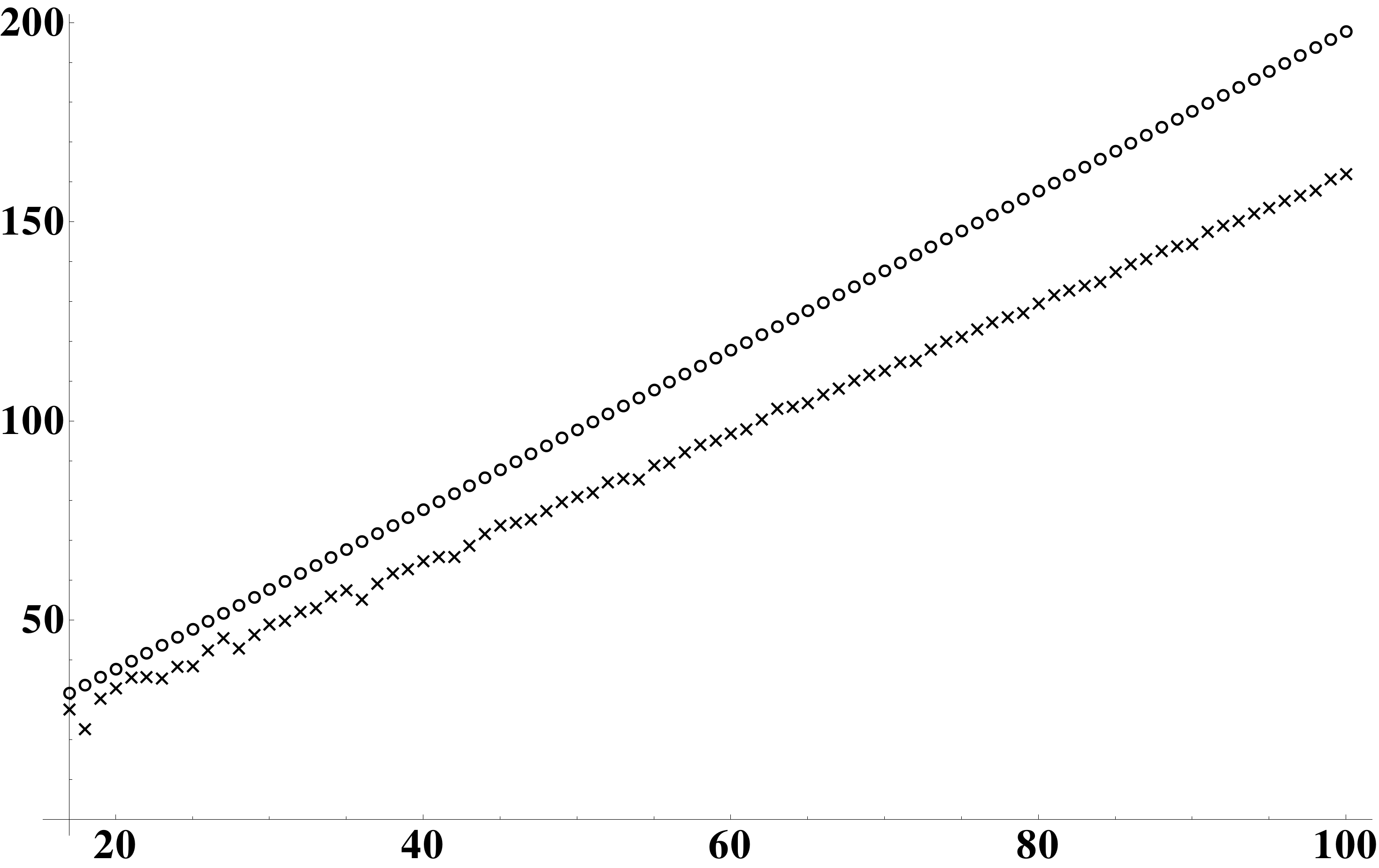}}
\hspace{2cm}
\subfigure[Energy of $C^{1,\gamma}_{150}$ with respect to $\gamma$]{\includegraphics[width=6.15cm]{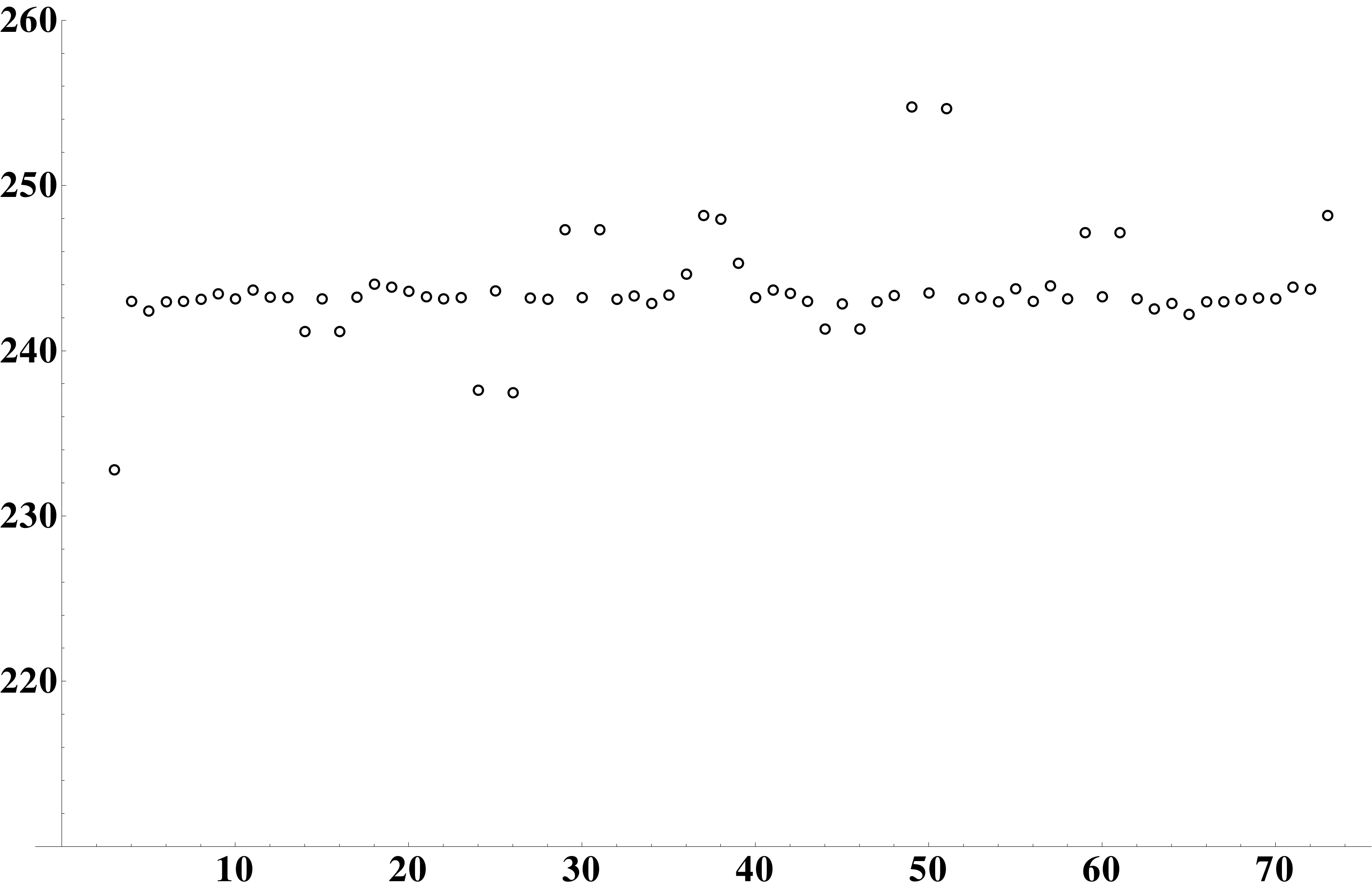}}
\caption{Energy of circulant graphs.}
\end{figure}
As a consequence of the theorem, we can carry out the sum of the Dirichlet kernels when the number of vertices is proportional to $2(\gamma-1)(\gamma+1)$.
\begin{corollary*}
Given integers $\gamma\geqslant3$ and $\alpha\geqslant1$, the energy of the circulant graph $C^{1,\gamma}_{2\alpha(\gamma-1)(\gamma+1)}$ is given by
\begin{align*}
E\Big(C^{1,\gamma}_{2\alpha(\gamma-1)(\gamma+1)}\Big)&=4\sum_{m\in\{1,\gamma\}}\bigg(\frac{\sin(\pi m(\lceil\gamma/2\rceil+1/(2\alpha(\gamma-1)))/(\gamma+1))\sin(\lceil\gamma/2\rceil\pi m/(\gamma+1))}{\sin(\pi m/(2\alpha(\gamma-1)(\gamma+1)))\sin(\pi m/(\gamma+1))}\\
&-\frac{\sin(\pi m(\lceil\gamma/2\rceil-1+1/(2\alpha(\gamma+1)))/(\gamma-1))\sin((\lceil\gamma/2\rceil-1)\pi m/(\gamma-1))}{\sin(\pi m/(2\alpha(\gamma-1)(\gamma+1)))\sin(\pi m/(\gamma-1))}\bigg).
\end{align*}
\end{corollary*}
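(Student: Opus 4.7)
The plan is to substitute $n=2\alpha(\gamma-1)(\gamma+1)$ directly into the formula of the theorem and exploit the fact that, for this choice of $n$, the arguments of the two families of floor functions are already integers. Indeed, for every $l\geqslant 0$,
\[
\frac{(2l+1)n}{2(\gamma+1)}=(2l+1)\alpha(\gamma-1),\qquad \frac{(2l+1)n}{2(\gamma-1)}=(2l+1)\alpha(\gamma+1),
\]
so the $\lfloor\cdot\rfloor$ symbols can be removed and the theorem reduces to
\[
E\bigl(C^{1,\gamma}_n\bigr)=4\sum_{m\in\{1,\gamma\}}\Bigl(\sum_{l=0}^{\lceil\gamma/2\rceil-1}D_{(2l+1)\alpha(\gamma-1)}(2\pi m/n)\;-\;\sum_{l=0}^{\lceil\gamma/2\rceil-2}D_{(2l+1)\alpha(\gamma+1)}(2\pi m/n)\Bigr).
\]

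Next, I would establish the general closed-form sum
\[
\sum_{l=0}^{L-1}D_{(2l+1)a}(x)=\frac{\sin(Lax)\,\sin\!\bigl(Lax+x/2\bigr)}{\sin(ax)\,\sin(x/2)}.
\]
Starting from $D_N(x)=\sin((N+1/2)x)/\sin(x/2)$, the numerator becomes $\sum_{l=0}^{L-1}\sin((2l+1)\theta+\phi)$ with $\theta=ax$ and $\phi=x/2$. A short geometric-series computation (take the imaginary part of $e^{i\phi}e^{i\theta}\sum_{l=0}^{L-1}e^{i2l\theta}$) gives $\sum_{l=0}^{L-1}\sin((2l+1)\theta+\phi)=\sin(L\theta)\sin(L\theta+\phi)/\sin\theta$, and dividing by $\sin(x/2)$ yields the stated identity.

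Finally, I would apply this identity twice with $x=2\pi m/n$: first with $(a,L)=(\alpha(\gamma-1),\lceil\gamma/2\rceil)$, so that $ax=\pi m/(\gamma+1)$ and $x/2=\pi m/(2\alpha(\gamma-1)(\gamma+1))$; and second with $(a,L)=(\alpha(\gamma+1),\lceil\gamma/2\rceil-1)$, so that $ax=\pi m/(\gamma-1)$. Rewriting
\[
Lax+x/2=\frac{\pi m\bigl(\lceil\gamma/2\rceil+1/(2\alpha(\gamma-1))\bigr)}{\gamma+1}\quad\text{and}\quad\frac{\pi m\bigl(\lceil\gamma/2\rceil-1+1/(2\alpha(\gamma+1))\bigr)}{\gamma-1}
\]
in the two cases and collecting the terms reproduces exactly the formula in the statement. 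There is no genuine obstacle here: once the floors are seen to be exact the problem collapses to the sine-sum identity, and the only mildly delicate bookkeeping is keeping track of the shift $\pi m/n$ that produces the $1/(2\alpha(\gamma\mp 1))$ correction inside the outer sine factors.
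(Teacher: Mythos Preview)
Your proposal is correct and follows essentially the same route as the paper: remove the floors (which are exact when $n=2\alpha(\gamma-1)(\gamma+1)$), prove the closed form $\sum_{l=0}^{L-1}D_{(2l+1)a}(x)=\sin(Lax)\sin(Lax+x/2)/(\sin(ax)\sin(x/2))$, and specialize $(a,L)$ twice. The only cosmetic difference is that you obtain the sine-sum identity via the imaginary part of a geometric series, whereas the paper multiplies by $\sin(ax)/\sin(ax)$ and telescopes using $2\sin\theta\sin\phi=\cos(\theta-\phi)-\cos(\theta+\phi)$; the resulting formula and its application are identical.
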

\begin{proof}
Let $a\geqslant1$ and $K\geqslant0$ be integers. The sum over $k$ of Dirichlet kernels of index $(2k+1)a$ is given by
\begin{equation*}
\sum_{k=0}^KD_{(2k+1)a}(x)=\sum_{k=0}^K\frac{\sin((2k+1)a+1/2)x}{\sin(x/2)}.
\end{equation*}
By multiplying the summation by $\sin(ax)/\sin(ax)$ and using the trigonometric identity\linebreak $2\sin{\theta}\sin{\phi}=\cos(\theta-\phi)-\cos(\theta+\phi)$, it comes
\begin{equation*}
\sum_{k=0}^KD_{(2k+1)a}(x)=\frac{\cos(x/2)-\cos(((2K+2)a+1/2)x)}{2\sin(x/2)\sin(ax)}=\frac{\sin(((2K+2)a+1)x/2)\sin((K+1)ax)}{\sin(x/2)\sin(ax)}.
\end{equation*}
The corollary then follows by applying the above relation first with $a=\alpha(\gamma-1)$, $K=\lceil\gamma/2\rceil-1$, second with $a=\alpha(\gamma+1)$, $K=\lceil\gamma/2\rceil-2$, and $x=2\pi m/n$, $m\in\{1,\gamma\}$.
\end{proof}
In \cite{MR2069280}, the author considered the graphs $K_n-H$ where $K_n$ is the complete graph on $n$ vertices and $H$ is a Hamilton cycle of $K_n$ and asked whether these graphs are hyperenergetic. In \cite{stevanovic2005remarks}, the author showed that the energy of $K_n-H$ is given by
\begin{equation*}
E(K_n-H)=n-3+\sum_{k=1}^{n-1}\lvert1+2\cos(2\pi k/n)\rvert
\end{equation*}
and that as $n$ goes to infintiy, it is hyperenergetic. In the following proposition we give a formula for it for all $n\geqslant3$.
\begin{proposition*}
For all $n\geqslant3$, the energy of $K_n-H$ is given by
\begin{equation*}
E(K_n-H)=2(n-3-(\lfloor2n/3\rfloor-\lfloor n/3\rfloor))+2\frac{\sin((\lfloor n/3\rfloor+1/2)2\pi/n)-\sin((\lfloor2n/3\rfloor+1/2)2\pi/n)}{\sin(\pi/n)}.
\end{equation*}
\end{proposition*}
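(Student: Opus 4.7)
The starting point is the formula from \cite{stevanovic2005remarks} already quoted just above the proposition,
$$E(K_n-H)=n-3+\sum_{k=1}^{n-1}\lvert 1+2\cos(2\pi k/n)\rvert,$$
so the whole task is to evaluate the sum on the right. The plan is to split it according to the sign of $1+2\cos(2\pi k/n)$, kill one side of the split using the identity $\sum_{k=1}^{n-1}\cos(2\pi k/n)=-1$, and then recognise what remains as a difference of two Dirichlet kernels.

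First I would locate the negative region. Since $1+2\cos\theta\geqslant 0$ iff $\theta\in[0,2\pi/3]\cup[4\pi/3,2\pi]$, the only indices $k\in\{1,\dots,n-1\}$ for which $1+2\cos(2\pi k/n)<0$ are those with $\lfloor n/3\rfloor<k<\lceil 2n/3\rceil$. I will therefore introduce
$$N\defeq\{\lfloor n/3\rfloor+1,\ldots,\lfloor 2n/3\rfloor\},\qquad |N|=\lfloor 2n/3\rfloor-\lfloor n/3\rfloor,$$
and check the mild technicality that when $3\mid n$ the endpoint $k=2n/3\in N$ yields $1+2\cos(2\pi k/n)=0$ and so is harmless: whether we place it in the ``positive'' or ``negative'' bucket, it contributes $0$ to $\sum\lvert\cdot\rvert$. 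With this convention,
$$\sum_{k=1}^{n-1}\lvert 1+2\cos(2\pi k/n)\rvert=\sum_{k\notin N}(1+2\cos(2\pi k/n))-\sum_{k\in N}(1+2\cos(2\pi k/n)).$$

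Next I would use $\sum_{k=1}^{n-1}\cos(2\pi k/n)=-1$ to eliminate the cosines outside $N$. The counting part gives $|\{1,\dots,n-1\}\setminus N|-|N|=n-1-2(\lfloor 2n/3\rfloor-\lfloor n/3\rfloor)$, and the cosines combine into
$$2\sum_{k\notin N}\cos(2\pi k/n)-2\sum_{k\in N}\cos(2\pi k/n)=-2-4\sum_{k\in N}\cos(2\pi k/n).$$
Adding back the prefactor $n-3$ from Stevanovi\'c's formula, the two integer contributions collapse neatly to $2(n-3-(\lfloor 2n/3\rfloor-\lfloor n/3\rfloor))$, which is exactly the first summand in the stated expression.

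Finally I would convert the remaining sum of cosines into Dirichlet kernels via the identity $2\sum_{k=a+1}^{b}\cos(kx)=D_b(x)-D_a(x)$ already used in the theorem, applied with $x=2\pi/n$, $a=\lfloor n/3\rfloor$, $b=\lfloor 2n/3\rfloor$. This yields $-4\sum_{k\in N}\cos(2\pi k/n)=2(D_{\lfloor n/3\rfloor}(2\pi/n)-D_{\lfloor 2n/3\rfloor}(2\pi/n))$, and inserting the closed form $D_N(2\pi/n)=\sin((N+1/2)2\pi/n)/\sin(\pi/n)$ produces precisely the second summand in the proposition. The only subtlety in the whole argument is the case $3\mid n$ described above; once that bookkeeping is settled, the rest is a direct calculation.
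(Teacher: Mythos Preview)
Your proposal is correct and follows essentially the same route as the paper: split the sum by the sign of $1+2\cos(2\pi k/n)$, convert the resulting cosine sums into differences of Dirichlet kernels $D_{\lfloor n/3\rfloor}$ and $D_{\lfloor 2n/3\rfloor}$, and use that the full sum of cosines is $-1$. The only cosmetic difference is that the paper keeps three regions and invokes $D_{n-1}(2\pi/n)=-1$ at the end, while you merge the two positive regions via $\sum_{k=1}^{n-1}\cos(2\pi k/n)=-1$ up front; these are the same identity.
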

\begin{proof}
We have
\begin{align*}
\sum_{k=1}^{n-1}\lvert1+2\cos(2\pi k/n)\rvert&=\sum_{k=1}^{\lfloor n/3\rfloor}(1+2\cos(2\pi k/n))-\sum_{k=\lfloor n/3\rfloor+1}^{\lfloor2n/3\rfloor}(1+2\cos(2\pi k/n))\\
&\ \ \ +\sum_{k=\lfloor2n/3\rfloor+1}^{n-1}(1+2\cos(2\pi k/n))\\
&=n-3-2(\lfloor2n/3\rfloor-\lfloor n/3\rfloor)+2D_{\lfloor n/3\rfloor}(2\pi/n)-2D_{\lfloor2n/3\rfloor}(2\pi/n)\\
&\ \ \ +D_{n-1}(2\pi/n).
\end{align*}
Since $D_{n-1}(2\pi/n)=-1$, the proposition follows.
\end{proof}
By elementary analysis, one can show that $E(K_n-H)-2(n-1)$ is increasing in $n$. As a consequence, we find that $K_n-H$ are hyperenergetics for all $n\geqslant10$. This has been previously found in \cite{stevanovic2005remarks}.

\nocite{*}
\bibliographystyle{plain}
\bibliography{bibliography}

\begin{thebibliography}{1}

\bibitem{MR2069280}
R.~Balakrishnan.
\newblock The energy of a graph.
\newblock {\em Linear Algebra Appl.}, 387:287--295, 2004.

\bibitem{MR2977757}
R.~Balakrishnan and K.~Ranganathan.
\newblock {\em A textbook of graph theory}.
\newblock Universitext. Springer, New York, second edition, 2012.

\bibitem{MR1271140}
Norman Biggs.
\newblock {\em Algebraic graph theory}.
\newblock Cambridge Mathematical Library. Cambridge University Press,
  Cambridge, second edition, 1993.

\bibitem{MR2401632}
Simon~R. Blackburn and Igor~E. Shparlinski.
\newblock On the average energy of circulant graphs.
\newblock {\em Linear Algebra Appl.}, 428(8-9):1956--1963, 2008.

\bibitem{MR2851448}
Aleksandar Ili{\'c} and Milan Ba{\v{s}}i{\'c}.
\newblock New results on the energy of integral circulant graphs.
\newblock {\em Appl. Math. Comput.}, 218(7):3470--3482, 2011.

\bibitem{MR2210213}
Igor Shparlinski.
\newblock On the energy of some circulant graphs.
\newblock {\em Linear Algebra Appl.}, 414(1):378--382, 2006.

\bibitem{stevanovic2005remarks}
Dragan Stevanovi{\'c} and Ivan Stankovi{\'c}.
\newblock Remarks on hyperenergetic circulant graphs.
\newblock {\em Linear algebra and its applications}, 400:345--348, 2005.

\end{thebibliography}

\end{document}